\documentclass[a4paper]{article}
\usepackage[T2A]{fontenc}
\usepackage[english]{babel}
\usepackage{mathtools}
\usepackage{amsmath}
\usepackage{amsfonts,amssymb}
\usepackage{amsthm}
\usepackage{esint}

\usepackage{mathrsfs}

\usepackage{graphicx}

\newcommand{\R}{\mathbb{R}}
\newcommand{\N}{\mathbb{N}}

\newcommand{\Z}{\mathbb{Z}}
\newcommand{\Ker}{{\rm Ker}}
\newcommand{\Img}{{\rm Img}}

\newcommand{\bx}{\mathbf{x}}
\newcommand{\bh}{\mathbf{h}}
\newcommand{\bff}{\mathbf{f}}
\newcommand{\dx}{\operatorname{\mathbf{dx}}}

\newcommand{\ba}{\mathbf{a}}

\newcommand{\curl}{\operatorname{curl}}
\newcommand{\divv}{\operatorname{div}}

\newcommand{\bv}{\mathbf{v}}
 \newcommand{\by}{\mathbf{y}} 
 
 \newcommand{\bn}{\mathbf{n}}

\newtheorem{theorem}{Theorem}[section]
\newtheorem{definition}{Definition}[section]
\newtheorem{remark*}{Remark}[]

\title{On the Multi-Dimensional Divergence-Curl Problem and Its Connection with Pseudo-Harmonic Fields}
\author{A.V.\,Gorshkov}

\begin{document}
\maketitle
\abstract{This article addresses the solvability of the multi-dimensional divergence-curl problem with a no-slip boundary condition. A solvability criterion is derived as an orthogonality condition of the vorticity function to pseudo-harmonic fields. A countable family of such fields, sufficient for the solvability of the three-dimensional problem in the exterior of a sphere, is also presented.}

\section{Introduction}
This article studies the problem of finding a solenoidal vector field $\bv$ in a domain $\Omega$ with a prescribed curl function $\bff$
\begin{align}\label{mainsyst}
\begin{cases}
\curl \bv = \bff,\\
\divv \bv = 0,
\end{cases}
\end{align}
subject to the homogeneous boundary condition
\begin{align}\label{boundcond}
\bv|_{\partial \Omega} = 0.
\end{align}

This boundary value problem has applications in hydrodynamics, where the homogeneous Dirichlet condition --- known as the {\it no-slip condition} --- describes the interaction of a viscous medium with a solid body. The divergence-curl problem has been studied in various formulations with weaker boundary conditions. In \cite{KirchhartSchulz} it was studied without boundary conditions. The Cauchy problem for the three-dimensional divergence-curl system and $L_p$--$L_q$ estimates for the solution were obtained in \cite{GallayWayne}. Boundary value problems with prescribed tangential and normal boundary conditions were considered in \cite{Divcurl1}. The exterior Neumann problem was investigated in \cite{Divcurl2}, \cite{Divcurl3}. The magnetostatics problem with various boundary conditions was studied in variational form in \cite{BramblePasciak}. In the author's work \cite{G1}, the divergence-curl problem for incompressible flows with the no-slip condition was solved. The present paper extends that study to the multi-dimensional case.

As is well known, solvability of the two-dimensional divergence-curl problem requires orthogonality of $\bff$ to harmonic functions \cite{G1}\cite{Quartapelle2}. In this article we extend this condition to higher dimensions by replacing harmonic fields in the orthogonality condition with {\it pseudo-harmonic} ones.

The vector Laplacian on $k$-forms admits the following invariant representation:
$$
\Delta = d^*d + dd^*.
$$
Here $d$ is the exterior derivative and $d^*$ is the codifferential. The kernel of this operator consists of {\it harmonic fields}.

For scalar functions and 2-forms, the two-dimensional Laplacian has only one non-zero term in each differential--codifferential pair:
$\Delta = d^*d = \divv(\nabla)$, or $\Delta = dd^* = \curl(\nabla^\perp)$.

We will prove solvability of the divergence-curl problem for {\it pseudo-harmonic fields}, in which only one pair of differentials --- namely $dd^*$ --- vanishes. For 2-forms on planar domains, these pseudo-harmonic fields coincide with harmonic functions, making the orthogonality principle with respect to pseudo-harmonic fields uniform across all dimensions.

On manifolds without boundary, by the adjointness of $d$ and $d^*$, the condition $dd^* = 0$ is equivalent to $d^* = 0$. However, for boundary value problems the situation changes significantly: the weaker requirement of orthogonality of $\bff$ to the kernel of $dd^*$ turns out to be not only necessary but also sufficient.

An important question is also the construction of a countable family of pseudo-harmonic fields, orthogonality to which is equivalent to the homogeneous Dirichlet boundary condition. For planar domains, a countable set of orthogonality conditions to harmonic functions takes the form ($\bx=(x_1,x_2)$):
\begin{itemize}
\item inside a disk:
\begin{align*}
\int_{|\bx|<r_0} f(\bx)(x_1+ix_2)^k \dx = 0,~k\in \N \cup \{0\},
\end{align*}

\item in the exterior of a disk:
\begin{align*}
\int_{|\bx|>r_0} \frac {f(\bx)}{(x_1+ix_2)^k} \dx = 0,~k\in \N \cup \{0\},
\end{align*}

\item in an annulus:
\begin{align*}
\int_{r_0<|\bx|<r_1} \frac {f(\bx)}{(x_1+ix_2)^k} \dx = 0,~k\in \Z.
\end{align*}
\end{itemize}
In the general case, analogous conditions for two-dimensional domains can be obtained via the Riemann mapping (see \cite{G1} for details).

The article presents one example of a three-dimensional divergence-curl problem with a countable family of pseudo-harmonic fields $\{\ba_n\}_{n\in \N}$ satisfying
$$
d d^* \ba_n = \curl \curl \ba_n =0.
$$
The orthogonality condition $(\bff,\ba_n)_{L_2}=0$ will be sufficient for the solvability of the problem.

This is particularly relevant for the numerical solution of the divergence-curl boundary value problem with the homogeneous no-slip condition, where the constraint on the right-hand side is imposed not on an entire countable family but only on a finite subset thereof, making the boundary value of the solution approximately zero --- a condition known in hydrodynamics as the {\it partial-slip condition}.

\section{The Divergence-Curl Problem in Differential Forms}

The divergence-curl problem consists of finding a form $\omega$ on a Riemannian manifold $M$ satisfying the system
\begin{align} \label{divcurl}
\begin{cases}
d\omega = f,\\
d^*\omega = 0.
\end{cases}
\end{align}

Here $\omega$ is the unknown form in the space of smooth $k$-forms $\Omega^k(M)$, $d:\Omega^k(M) \to \Omega^{k+1}(M)$ is the exterior derivative, $d^*:\Omega^k(M) \to \Omega^{k-1}(M)$ is the codifferential defined by $d^*=(-1)^{n+nk+1} *d*$, $*:\Omega^k(M) \to \Omega^{n-k}(M)$ is the Hodge star operator.
The inner product on forms is defined by
$(\omega_1, \omega_2) = \int_M \omega_1 \wedge *\omega_2$.

The exterior derivative defines the cochain complex
$$
{\displaystyle \ldots \xrightarrow {} \Omega_{n-1}(M)\xrightarrow {d_{n}} \Omega_{n}(M)\xrightarrow {d _{n+1}} \Omega_{n+1}(M)\xrightarrow {} \ldots },
$$
while $d^*$ defines the chain complex
$$
{\displaystyle \ldots \xleftarrow {} \Omega_{n-1}(M)\xleftarrow {d^*_{n}} \Omega_{n}(M)\xleftarrow {d ^*_{n+1}} \Omega_{n+1}(M)\xleftarrow {} \ldots }.
$$

Since $d^2=0$, a necessary condition for solvability is $df=0$. On manifolds without boundary, the codifferential $d^*$ is adjoint to $d$:
\begin{align}\label{scalprod}
(d\omega_1, \omega_2) = (\omega_1, d^*\omega_2).
\end{align}
Consequently, $f$ must be orthogonal to all forms $\omega_1$ in the kernel of $d^*$:
\begin{align}\label{ort1}
(f,\omega_1) = (d\omega, \omega_1) = (\omega, d^*\omega_1) = 0~\forall \omega_1:~d^*\omega_1=0.
\end{align}

Suppose the geometry of $M$ is such that $d^*\omega=0$ implies the existence of a potential $a$ satisfying $\omega = d^*a$. Note that such a form $a$ may exist even for domains with non-trivial topology under appropriate boundary conditions.
Then from the divergence-curl system we obtain the equation
$$d d^* a= f,$$
whose solvability requires $f$ to be orthogonal to all forms in the kernel of $dd^*$:
\begin{align}\label{ort2}
( f,\omega_1) = 0~\forall \omega_1:~dd^*\omega_1=0.
\end{align}

\begin{definition} A form $\omega$ is called \emph{pseudo-harmonic} if $dd^*\omega=0$.
\end{definition}

On the other hand, $(dd^* \omega_1, \omega_2) = (\omega_1, dd^* \omega_2)$, and from (\ref{scalprod}) it follows that on manifolds without boundary the condition $dd^*\omega_1=0$ is equivalent to the condition (\ref{ort1}), so no additional orthogonality constraint arises.

For manifolds with boundary, however, condition (\ref{ort2}) becomes stronger than (\ref{ort1}). By Stokes' formula:
\begin{align*}
\int_M (dd^* a,  f) = 
\int_M ( a, dd^*  f) +
\int_{\partial M} (d^* a, * f)
- \int_{\partial M} (d^* f, *a).
\end{align*}

We supplement the system (\ref{divcurl}) with the homogeneous boundary condition
$$
\omega\big|_{\partial M} = 0.
$$
Then $d^* a =0$ on $\partial M$. Assuming the potential $a$ can be chosen so that $* a = 0$ on $\partial M$, orthogonality to pseudo-harmonic forms (\ref{ort2}) becomes a necessary condition for solvability.

\vskip0.5cm
We illustrate the orthogonality principle in concrete dimensions: a solenoidal two-dimen\-si\-onal vector field $\bv=(v_1,v_2)$ with homogeneous boundary conditions can be represented as the skew gradient
$$\bv = d^*\psi = \nabla^\perp \psi$$
of a stream function $\psi$, where $\nabla_\bx^\perp = (-\partial_{x_2}, \partial_{x_1})$.

The no-slip condition $\bv=0$ on $\partial \Omega$ implies boundary conditions on the stream function:
\begin{align}\label{streamboundarycond}
\psi(\bx)= \mathrm{const}, \quad \frac {\partial \psi (\bx)}{\partial \bn} =0,~\bx \in \partial \Omega.
\end{align}
Here and below $\bn$ denotes the outward unit normal to the boundary.

Multiplying the Poisson equation
$$
\Delta \psi = \curl \bv
$$
by a harmonic function $h$ and applying Green's formula gives
$$
\int _{\Omega } \bff h \,\dx = \int _{\Omega }h \,\Delta \psi \,\dx=\int _{\Omega }\psi\,\Delta h\,\dx+\int _{\partial \Omega }\left(h{\frac {\partial \psi}{\partial \bn }}-\psi{\frac {\partial h}{\partial \bn }}\right)\,dS = 0.
$$
Orthogonality of $\bff$ to harmonic functions is both necessary and sufficient for solvability of (\ref{mainsyst}), (\ref{boundcond}).

\vskip 0.5cm
For $n=2$ the complexes take the form:
\begin{align*}
&0 \xrightarrow {} \Omega_{0}(M)\xrightarrow {\nabla} \Omega_{1}(M)\xrightarrow {\curl} \Omega_{2}(M)\xrightarrow {} 0, \\
&0 \xleftarrow {} \Omega_{0}(M)\xleftarrow {-\divv} \Omega_{1}(M)\xleftarrow {\nabla^\perp} \Omega_{2}(M)\xleftarrow {} 0.
\end{align*}

The Laplacian on $k$-forms is $\Delta = d^*d + dd^*$. For scalar functions it has only one non-zero pair:
$\Delta = d^*d = \divv(\nabla)$, or $\Delta = dd^* = \curl(\nabla^\perp)$, and $dd^*=0$ is equivalent to $\Delta=0$.
Therefore in two-dimensional divergence-curl problems, orthogonality to pseudo-harmonic fields reduces to orthogonality to harmonic functions, as described above.

\vskip 0.5cm
For $n=3$ the complexes are:
\begin{align*}
&0 \xrightarrow {} \Omega_{0}(M)\xrightarrow {\nabla} \Omega_{1}(M)\xrightarrow {\curl} \Omega_{2}(M)\xrightarrow {\divv} \Omega_{3}(M)\xrightarrow {} 0,\\
&0 \xrightarrow {} \Omega_{0}(M)\xleftarrow {-\divv} \Omega_{1}(M)\xleftarrow {\curl} \Omega_{2}(M)\xleftarrow {-\nabla} \Omega_{3}(M)\xleftarrow {} 0.
\end{align*}
The vector Laplacian contains both differential--codifferential pairs:
$$\Delta = d^*d + dd^* = -\nabla \divv + \curl^2.$$

If the flux through each connected component is zero, then a solenoidal field $\bv$ has a vector potential $\bv = \curl \ba$. Moreover, if $(\bv, \bn)=0$, the potential $\ba$ can be chosen satisfying $\ba \times \bn = 0$. Then $dd^* = \curl^2$ and condition (\ref{ort2}) becomes $\bff \perp \Ker \left ( \curl^2 \right )$.

\begin{definition} A covector field $\bv$ is called \emph{pseudo-harmonic} if
$$dd^* \bv = \curl^2 \bv = 0.$$
\end{definition}

\begin{remark*}
For dimensions greater than 2, forms lying in the kernels of both $d$ and $d^*$ are also subject to the orthogonality condition. On manifolds without boundary, harmonic fields lie in the kernels of both operators. However, orthogonality to harmonic fields is not sufficient for solvability of the boundary value problem (\ref{mainsyst}), (\ref{boundcond}). The broader class of pseudo-harmonic fields provides the correct class of forms appearing in the necessary and sufficient conditions for solvability.
\end{remark*}

\section{Solvability of the Divergence-Curl Problem}

Let $\Omega$ be a bounded Lipschitz domain, $\mathcal D(\Omega)$ the space of smooth compactly supported functions in $\Omega$. Define
$$
\mathcal{V}=\{ \bv \in \mathcal D(\Omega),~\divv \bv=0 \},
$$
and let $\mathcal H$ be its closure in $L_2(\Omega)$. Since $\divv \bv \in L_2(\Omega)$, the normal trace $(\bv,\bn)\in H^{-1/2}(\partial \Omega)$ is well defined.
We also introduce the spaces
\begin{align*}
&H(\curl,\Omega) = \{\ba\in L_2(\Omega),~\curl \ba\in L_2(\Omega)\},\\
&H(\curl^2,\Omega) = \{\ba \in L_2(\Omega),~\curl \ba\in L_2(\Omega),~\curl^2 \ba\in L_2(\Omega)\}.
\end{align*}
For $\bv \in H(\curl, \Omega)$, the trace $\bv \times \bn \in H^{-1/2}(\partial \Omega)$ is defined; for $\bv \in H(\curl^2, \Omega)$, so is $\curl \bv \times \bn \in H^{-1/2}(\partial \Omega)$. We then define
$$
H_0(\curl^2,\Omega) = \{\ba\in H(\curl^2,\Omega),~( \ba \times \bn)=0,~(\curl \ba \times \bn)=0\}
$$
as the closure of $\mathcal D(\Omega)$ under the norm
$$
\|\ba \|_{H_0(\curl^2,\Omega)}^2 = \|\ba \|_{H^1(\Omega)}^2 + \| \curl \ba \|_{L_2(\Omega)}^2+ \| \curl^2 \ba \|_{L_2(\Omega)}^2.
$$

On $L_2(\Omega)$ define the norm
$$
\|\bff\|_{H^*(\Omega)} = \sup_{\ba \in H_0(\curl^2),\ba\neq 0} \frac{\int_{\Omega} (\bff(\bx), \ba) \dx}{\|\ba\|_{H_0(\curl^2)}}.
$$
Let $H^*(\Omega)$ denote the completion of $L_2$ under this norm.

Before stating the solvability theorem, we give a definition of a solution. The boundary condition (\ref{boundcond}) implies that on each connected component $\partial \Omega_i$ of $\partial \Omega$, the sufficient condition for the existence of a vector potential holds:
$$
\int_{\partial \Omega_i} (\bv, \bn) \dx = 0.
$$
By the Weyl decomposition \cite{Monk}\cite{Temam}, any $\bv \in L_2$ then has a vector potential $\ba$ with $\bv = \curl \ba$, and this potential can be chosen satisfying $(\ba \times \bn)=0$ (see \cite{BramblePasciak}).

The problem then reduces to:
\begin{align}
&\curl^2 \ba = \bff, \label{maineq1} \\
&\curl \ba  \times \bn |_{\partial \Omega} = 0,\label{mainsystcond1}\\
&\ba  \times \bn |_{\partial \Omega} = 0.\label{mainsystcond2}
\end{align}

Condition (\ref{mainsystcond1}) means orthogonality of the vector field $\bv$ to the boundary, and (\ref{mainsystcond2}) means tangentiality. Together they ensure (\ref{boundcond}).

\begin{definition} A \emph{solution} $\bv$ of problem (\ref{mainsyst}), (\ref{boundcond}) is the curl of a function $\ba\in H(\curl^2, \Omega)$ satisfying (\ref{maineq1})--(\ref{mainsystcond2}).
\end{definition}

The problem is also overdetermined, as was the case in two dimensions for the Poisson equation with boundary conditions (\ref{streamboundarycond}). In this section we prove the following theorem on unique solvability:
\begin{theorem}\label{mainTh}
Let $\Omega$ be a bounded Lipschitz domain and $\bff\in L_2(\Omega)$. The problem (\ref{mainsyst}), (\ref{boundcond}) has a unique solution if and only if:
\begin{enumerate}
\item $\divv \bff = 0,$
\item $\int_{\Omega} (\bff,\bh)\dx=0~\forall \bh \in \Ker \left ( \curl^2 \right ).$
\end{enumerate}
\end{theorem}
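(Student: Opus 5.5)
Necessity and sufficiency are handled separately, with uniqueness treated last. For necessity, take a solution $\bv=\curl\ba$ with $\ba\in H(\curl^2,\Omega)$ satisfying (\ref{maineq1})--(\ref{mainsystcond2}). Then $\divv\bff=\divv\curl^2\ba=0$ in the distributional sense. Now let $\bh\in\Ker(\curl^2)$. We will understand $\Ker(\curl^2)$ as the set of $\bh\in H(\curl,\Omega)$ with $\curl^2\bh=0$, so that the traces below make sense. Integrate by parts twice using Green's formula for the curl, $\int_\Omega(\curl\bu,\bw)\dx=\int_\Omega(\bu,\curl\bw)\dx+\langle \bn\times\bu,\bw\rangle_{\partial\Omega}$. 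The first integration moves one curl from $\ba$ onto $\bh$; its boundary term contains $\curl\ba\times\bn=0$. The second moves the other curl; its boundary term contains $\ba\times\bn=0$. The result is $(\bff,\bh)=(\ba,\curl^2\bh)=0$. To make the duality pairings rigorous on a Lipschitz domain, I would use that $\ba$ lies in $H_0(\curl^2,\Omega)$, which is the closure of $\mathcal D(\Omega)$. For $\ba\in\mathcal D(\Omega)$ the identity $(\curl^2\ba,\bh)=(\ba,\curl^2\bh)$ is just the definition of the distributional derivative, and it passes to the limit.

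For sufficiency, set up a variational problem on the Hilbert space $X=H_0(\curl^2,\Omega)$. On $X$, take the symmetric form $B(\ba,\bb)=(\curl^2\ba,\curl^2\bb)$. Its kernel is $K=X\cap\Ker(\curl^2)$. Let $X_1$ be the orthogonal complement of $K$ in $X$. I will look for $\ba$ solving $\curl^2\ba=\bff$ in the form $\ba=\curl^2\bm{\phi}$ with $\bm\phi\in X$, i.e.\ solve the fourth-order problem $(\curl^2)^2\bm\phi=\bff$ weakly by minimizing $\tfrac12\|\curl^2\bm\phi\|^2-(\bff,\bm\phi)$. A more direct route is to use the closed range theorem. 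The operator $T=\curl^2:X\to L_2(\Omega)$ is bounded. By the necessity computation its range lies in $\Ker(\curl^2)^\perp\cap\{\divv=0\}$. Sufficiency is exactly the claim that its range is closed. If so, $\operatorname{Ran}T=(\Ker T^*)^\perp$. The kernel of the adjoint consists of $\bh\in L_2$ with $(\curl^2\ba,\bh)=0$ for all $\ba\in\mathcal D(\Omega)$, which is precisely the distributional kernel of $\curl^2$. Condition 2 therefore says $\bff\perp\Ker T^*$, hence $\bff\in\operatorname{Ran}T$. Condition 1 enters because gradients $\nabla p$ lie in $\Ker(\curl^2)$ automatically. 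Condition 2 thus already forces orthogonality to gradients, and it is consistent with condition 1. Condition 1 is retained since it is needed to identify the closure of the range with divergence-free fields. The resulting $\ba\in X$ gives $\bv=\curl\ba$ with both boundary traces zero and with $\curl\bv=\bff$, $\divv\bv=0$.

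The main obstacle is closedness of the range, i.e.\ the estimate
\begin{align*}
\|\curl\ba\|_{L_2}\le C\|\curl^2\ba\|_{L_2},\quad \ba\in X,\ \ba\perp K.
\end{align*}
I would first reduce to $\bv=\curl\ba$. This field lies in $H_0^1$, because both its tangential and normal traces vanish, and it satisfies $\divv\bv=0$. The Poincaré inequality gives $\|\bv\|\le C\|\nabla\bv\|$. On $H_0^1$, the identity $\|\nabla\bv\|^2=\|\curl\bv\|^2+\|\divv\bv\|^2$, obtained by integrating by parts on test functions and extending by density, then yields $\|\bv\|_{H^1}\le C\|\curl\bv\|=C\|\curl^2\ba\|$. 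This controls $\bv$ without any compactness argument. The remaining part of $\ba$ is fixed by the gauge: modulo $K$ one may take $\divv\ba=0$ with $\ba\times\bn=0$. Then the Friedrichs/Gaffney inequality for bounded Lipschitz domains, which requires the vanishing-flux assumptions discussed before the theorem to kill the cohomology, gives $\|\ba\|\le C\|\curl\ba\|$.

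Uniqueness follows because two solutions differ by $\bv=\curl\ba$ with $\curl\bv=0$, $\divv\bv=0$ and $\bv\in H_0^1$. The identity above then gives $\nabla\bv=0$, so $\bv$ is constant, and the zero trace forces $\bv=0$.
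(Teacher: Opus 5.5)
Your proof is correct and follows the same route as the paper: necessity by Green's formula, with the two vanishing tangential traces killing the boundary terms; sufficiency via $\Img \curl^2 = (\Ker \curl^{2,*})^\perp$; and uniqueness by an energy identity (you use $\|\nabla\bv\|^2=\|\curl\bv\|^2+\|\divv\bv\|^2$ on $H^1_0$, where the paper tests $\curl^2\ba=0$ against $\ba$). The real difference is that you prove the range of $\curl^2$ is closed, which the paper only asserts by calling $\curl^{2,*}$ an epimorphism; two small fixes: take $\Ker(\curl^2)$ to be the $L_2$ distributional kernel throughout (your density argument for necessity already covers it, and it is exactly $\Ker T^*$), and note that the Dirichlet fields blocking the Friedrichs step are gradients lying in $K$, so they are removed by working modulo $K$, not by the flux condition.
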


\begin{proof}
Solenoidality of $\bff$ follows from properties of the exterior derivative. Necessity of the orthogonality condition follows from the definition of the solution and the generalized Stokes formula. Multiplying (\ref{maineq1}) by $\bh$ with $\curl^2 \bh =0$ and integrating by parts, all boundary terms vanish due to the boundary conditions, giving
\begin{align*}
\int_M (\curl^2 \ba, \bh) \dx = 
\int_M (\ba, \curl^2 \bh) \dx +
\int_{\partial M} (\curl \ba \times \bn, \bh)\,ds - 
\int_{\partial M} (\ba \times \bn, \curl \bh) = 0
\end{align*}
for all $\bh \in \Ker \left ( \curl^2 \right )$.

For sufficiency: the operator $\curl^2: H_0(\curl^2,\Omega)\to L_2(\Omega)$ has an adjoint $\curl^{2,*}: L_2(\Omega) \to H^*(\Omega)$. By the boundary conditions, $\curl^2$ and $\curl^{2,*}$ agree on smooth functions, and $\Img \curl^2 = \left ( \Ker \curl^{2,*} \right )^\perp$. Since $\curl^{2,*}$ is an epimorphism, for any $\bff \in L_2$ orthogonal to $\Ker \curl^{2,*}$, a solution $\ba \in H_0(\curl^2,\Omega)$ exists.

Uniqueness follows from Stokes' formula. Let $\ba$ be a solution with $\bff=0$. Testing against $\ba$:
\begin{align*}
\int_\Omega (\curl^2 \ba,\ba) \dx= 
\int_\Omega (\curl \ba, \curl \ba) \dx + 
\int_{\partial \Omega} (\curl \ba \times \bn, \ba) \dx =
\int_\Omega (\curl \ba, \curl \ba) \dx = \\ \|\curl \ba\|_{L_2} = 0.
\end{align*}
Hence $\bv = \curl \ba \equiv 0$.  
\end{proof}

\begin{remark*}
Since $\bff \in \mathcal H$, the boundary condition (\ref{boundcond}) additionally implies $(\bff,\bn)=0$. Indeed, if $(\bff,\bn) \neq 0$ on $\partial \Omega$, then the tangential part of $\bv$ on the boundary is non-zero. This condition does not appear explicitly in Theorem \ref{mainTh} because condition~2 already implies orthogonality to gradient fields, and combined with solenoidality, the Gauss--Ostrogradsky formula gives vanishing normal component of $\bff$.
\end{remark*}

\section{The Divergence-Curl Problem in \\Spherical Domains}

We now apply the general theory to a three-dimensional divergence-curl problem in the exterior of a sphere. We construct a countable family of pseudo-harmonic fields $\ba_n \in \Ker(\curl^2)$ that determine its solvability. These fields will also be solenoidal, hence harmonic ($\Delta \ba_n =0$), while $\curl \ba_n$ remains non-zero.

In the domain $B_{r_0}=\left \{\bx \in \R^3 ~\big | ~ |\bx|>r_0 \right \}$ for some fixed $r_0>0$, consider the exterior divergence-curl problem (\ref{mainsyst}) with no-slip boundary conditions on the sphere and at infinity:
\begin{align}
&\bv \big |_{|\bx|=r_0} = 0, \label{boundcondsphere} \\
&\bv \to 0,~|\bx|\to \infty. \label{boundcondinfsphere}
\end{align}

We seek the solution as an expansion in vector spherical harmonics:
\begin{align*}
\bv =\sum _{\ell =0}^{\infty }\sum _{m=-\ell }^{\ell }\left(V_{\ell m}^{r}(r)\mathbf {Y} _{\ell m}+V_{\ell m}^{(1)}(r)\mathbf {\Psi}_{\ell m}+V_{\ell m}^{(2)}(r)\mathbf {\Phi}_{\ell m}\right).
\end{align*}

The vector spherical harmonics are defined by
\begin{align*}
&\mathbf {Y} _{\ell m}=Y_{\ell }^{m}{\hat {\mathbf {r} }},\\
&\mathbf {\Psi } _{\ell m}=r\nabla Y_{\ell }^{m},\\
&\mathbf {\Phi } _{\ell m}=\mathbf {r} \times \nabla Y_{\ell }^{m}
\end{align*}
through the scalar spherical harmonic $Y_{\ell }^{m}$:
$$
Y_{\ell }^{m}(\theta ,\varphi )={\sqrt {{\frac {(2\ell +1)}{4\pi }}{\frac {(\ell -m)!}{(\ell +m)!}}}}\,P_{\ell }^{m}(\cos {\theta })\,e^{im\varphi }.
$$

The divergence and curl are expressed as follows:
\begin{align*}
&\nabla \cdot \bv =\sum _{\ell =0}^{\infty }\sum _{m=-\ell }^{\ell }\left({\frac {dV_{\ell m}^{r}}{dr}}+{\frac {2}{r}}V_{\ell m}^{r}-{\frac {\ell (\ell +1)}{r}}V_{\ell m}^{(1)}\right)\mathbf Y_{\ell m},
\\
&\nabla \times \bv = \sum _{\ell =0}^{\infty }\sum _{m=-\ell }^{\ell }\Bigg(-{\frac {\ell (\ell +1)}{r}}V_{\ell m}^{(2)}\mathbf {Y} _{\ell m}-\left({\frac {dV_{\ell m}^{(2)}}{dr}}+{\frac {1}{r}}V_{\ell m}^{(2)}\right)\mathbf {\Psi } _{\ell m}+\\
&\left(-{\frac {1}{r}}V_{\ell m}^{r}+ {\frac {dV_{\ell m}^{(1)}}{dr}}+{\frac {1}{r}}V_{\ell m}^{(1)}\right)\mathbf {\Phi } _{\ell m}\Bigg).
\end{align*}

Denoting the spherical components of $\bff$ by $(f_{\ell m}^r, f_{\ell m}^{(1)}, f_{\ell m}^{(2)})$, the system (\ref{mainsyst}) in terms of harmonic coefficients reads:
\begin{align}
&-{\frac {\ell (\ell +1)}{r}}V_{\ell m}^{(2)} = f_{\ell m}^r, \label{mainsyst_sphere_eq1}\\
&-{\frac {dV_{\ell m}^{(2)}}{dr}}-{\frac {1}{r}}V_{\ell m}^{(2)} = f_{\ell m}^{(1)}, \label{mainsyst_sphere_eq2}\\
&-{\frac {1}{r}}V_{\ell m}^{r}+ {\frac {dV_{\ell m}^{(1)}}{dr}}+{\frac {1}{r}}V_{\ell m}^{(1)} = f_{\ell m}^{(2)},\label{mainsyst_sphere_eq3}\\
&\frac {dV_{\ell m}^{r}}{dr}+{\frac {2}{r}}V_{\ell m}^{r}-{\frac {\ell (\ell +1)}{r}}V_{\ell m}^{(1)} = 0.\label{mainsyst_sphere_eq4}
\end{align}

From the first equation we obtain $f_{\ell m}^r(r_0) = 0$, i.e., $(\bff,\bn)=0$ on the boundary --- a condition that holds for arbitrary domains (see the remark in the previous section).

For spherical domains, an additional boundary condition on a second component of $\bff$ can be derived. The solenoidality condition gives:
\begin{align}\label{divf}
\frac {df_{\ell m}^{r}}{dr}+{\frac {2}{r}}f_{\ell m}^{r}-{\frac {\ell (\ell +1)}{r}}f_{\ell m}^{(1)} = 0,
\end{align}
from which a further boundary condition follows:
\begin{align*}
r_0 \frac {df_{\ell m}^r(r_0)}{dr} \Big |_{r=r_0} - \ell(\ell+1)f_{\ell m}^{(1)}(r_0) = 0.
\end{align*}

The system (\ref{mainsyst_sphere_eq1})--(\ref{mainsyst_sphere_eq4}), supplemented by solenoidality (\ref{divf}), is degenerate: equation (\ref{mainsyst_sphere_eq2}) follows from (\ref{mainsyst_sphere_eq1}) and (\ref{divf}), while the last two equations (\ref{mainsyst_sphere_eq3}), (\ref{mainsyst_sphere_eq4}) yield integral constraints on $f_{\ell m}^{(2)}$. Since $\mathbf{\Psi}_{\ell m} = \mathbf{\Phi}_{\ell m} = 0$ for $\ell=0$, we proceed with $\ell=1,\ldots,\infty$, $m\in\Z \cap [-\ell,\ell]$.

The solution basis of the system
\begin{align*}
&\frac {dV_{\ell m}^{r}}{dr}+{\frac {2}{r}}V_{\ell m}^{r}-{\frac {\ell (\ell +1)}{r}}V_{\ell m}^{(1)} = 0, \\
&-{\frac {1}{r}}V_{\ell m}^{r}+ {\frac {dV_{\ell m}^{(1)}}{dr}}+{\frac {1}{r}}V_{\ell m}^{(1)} = f_{\ell m}^{(2)}
\end{align*}
consists of two vector functions
\begin{align*}
\begin{bmatrix}-\frac{r^{-2-\ell}}{\ell+1} \\r^{-2-\ell}\\\end{bmatrix},
\quad
\begin{bmatrix}\frac{r^{\ell-1}}\ell \\r^{\ell-1}\\\end{bmatrix}.
\end{align*}

The system can be solved explicitly with the boundary condition at infinity (\ref{boundcondinfsphere}):
\begin{align}
&V_{\ell m}^{r} = \frac{\ell (\ell +1)}{r^{2+\ell}}\int_{r_0}^r
s^{2+\ell}  f_{\ell m}^{(2)}(s)\,ds +
\frac{\ell (\ell +1)}{r^{1-\ell}}\int_{r}^\infty
s^{1-\ell}  f_{\ell m}^{(2)}(s)\,ds,  \label{sol_sphere_1}\\
&V_{\ell m}^{(1)} = - \frac{\ell}{r^{2+\ell}}\int_{r_0}^r
s^{2+\ell}  f_{\ell m}^{(2)}(s)\,ds +
\frac{(\ell +1)}{r^{1-\ell}}\int_{r}^\infty
s^{1-\ell}  f_{\ell m}^{(2)}(s)\,ds. \label{sol_sphere_2}
\end{align}

The integral solvability condition in terms of spherical coefficients is:
\begin{align}\label{ortcondspherepolar}
\int_{r_0}^\infty
s^{1-\ell}  f_{\ell m}^{(2)}(s)\,ds = 0.
\end{align}

Returning to the original vector function $\bff$ via
\begin{align*}
f_{\ell m}^{(2)}(r) = \frac1{\ell (\ell +1)} \iint_{|\bx|=1} \bff(r\bx) \cdot \overline{\mathbf {\Phi}_{\ell m}}(\bx)\,dS,
\end{align*}
we obtain, for $\ell=1,\ldots,\infty$, $m\in\Z \cap [-\ell,\ell]$:
\begin{align}\label{ortcondsphere}
\iiint_{B_{r_0}} \frac{\bff(\bx) \cdot \overline{\mathbf {\Phi}_{\ell m}}\left (\bx/|\bx| \right )}
{|\bx|^{1 + \ell}} \dx =0.
\end{align}

Thus solvability requires $\bff(\bx)$ to be orthogonal to the family $\mathbf{\Phi}_{\ell m}/r^{\ell+1}$. We verify that these fields are indeed pseudo-harmonic, i.e.,
$$\curl^2 \frac{\mathbf \Phi_{\ell m}}{r^{\ell+1}} =0.$$

Using the curl identities for vector spherical harmonics (where $g(r)$ is a differentiable function):
\begin{align*}
\nabla \times \left(g(r)\mathbf {Y} _{\ell m}\right)&=-{\frac {1}{r}}g \mathbf {\Phi } _{\ell m},\\
\nabla \times \left(g(r) \mathbf {\Psi } _{\ell m}\right)&=\left({\frac {dg}{dr}}+{\frac {1}{r}}g\right)\mathbf {\Phi } _{\ell m},\\
\nabla \times \left(g(r)\mathbf {\Phi } _{\ell m}\right)&=-{\frac {\ell (\ell +1)}{r}}g\mathbf {Y} _{\ell m}-\left({\frac {dg}{dr}}+{\frac {1}{r}}g\right)\mathbf {\Psi } _{\ell m},
\end{align*}
we compute:
\begin{align*}
&\nabla \times \frac{\mathbf \Phi_{\ell m}}{r^{\ell+1}} = - \frac{\ell(\ell+1)}{r^{\ell+2}}\mathbf Y_{\ell m} + \frac \ell{r^{\ell+2}} \mathbf \Psi_{\ell m},\\
&\nabla \times \frac{\mathbf Y_{\ell m}}{r^{\ell+2}} = -\frac 1{r^{\ell+3}} \mathbf \Phi_{\ell m},\\
&\nabla \times \frac{\mathbf \Psi_{\ell m}}{r^{\ell+2}} =
-\frac {\ell+1}{r^{\ell+3}} \mathbf \Phi_{\ell m}.
\end{align*}

Hence:
$$
\nabla \times \nabla \times \frac{\mathbf \Phi_{\ell m}}{r^{\ell+1}} = -\frac{\ell(\ell+1) - \ell(\ell+1)}{r^{\ell+3}}\mathbf \Phi_{\ell m}  = 0.
$$

These fields are also divergence-free: since the radial component of $\mathbf{\Phi}_{\ell m}$ is zero, for any function $g(r)$
\begin{align*}
\nabla \cdot \left(g(r){\mathbf\Phi } _{\ell m}\right)&=0.
\end{align*}
Therefore
$$
\Delta \frac{\mathbf \Phi_{\ell m}}{r^{\ell+1}} = 0,
$$
and the fields $\frac{\mathbf \Phi_{\ell m}}{r^{\ell+1}}$ are harmonic --- yet their curl is non-zero.

\begin{theorem}
Let $\bff \in L_p(B_{r_0})$ with $p>1$, satisfying $\divv \bff = 0$, $(\bff,\bn)\big|_{|\bx|=r_0} = 0$, and the orthogonality condition (\ref{ortcondsphere}) for $\ell=1,\ldots,\infty$, $m\in\Z \cap [-\ell,\ell]$. Then there exists a unique solution of the divergence-curl problem (\ref{mainsyst}), (\ref{boundcond}) with boundary conditions (\ref{boundcondsphere}), (\ref{boundcondinfsphere}), given explicitly by the Biot--Savart-Laplace formula
\begin{align} \label{BiotSavar3d}
\bv(\bx) = -\frac{1}{4\pi} \iiint_{B_{r_0}} \frac{(\bx-\by)\times \bff(\by)}{|\bx-\by|^3} \,d\mathbf{y},
\end{align}
with estimates
\begin{align}
&\|\bv(\cdot)\|_{L_q} \leq C   \| \bff \|_{L_p},\quad\frac 1q = \frac 1p - \frac 13,\quad 1<p<q<\infty, \label{est1} \\
&\|\nabla \bv(\cdot)\|_{L_p} \leq C \| \bff \|_{L_p},\quad p>1. \label{est2}
\end{align}
\end{theorem}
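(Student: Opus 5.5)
Extend $\bff$ by zero to the ball $|\bx|\le r_0$ and call the result $\tilde\bff\in L_p(\R^3)$. Because $\divv\bff=0$ in $B_{r_0}$ and $(\bff,\bn)=0$ on $|\bx|=r_0$, Gauss's formula shows that $\tilde\bff$ is divergence-free in $\mathcal D'(\R^3)$: for $\varphi\in\mathcal D(\R^3)$ we have $\int\tilde\bff\cdot\nabla\varphi=\int_{|\bx|=r_0}(\bff,\bn)\varphi\,dS=0$. Formula (\ref{BiotSavar3d}) is then $\bv=\curl(\Gamma*\tilde\bff)$, where $\Gamma=1/(4\pi|\bx|)$ is the Newtonian potential, so $\divv\bv=0$ automatically. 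Also $\curl\bv=\curl\curl(\Gamma*\tilde\bff)=\nabla\divv(\Gamma*\tilde\bff)+\tilde\bff=\tilde\bff$, since $\divv(\Gamma*\tilde\bff)=\Gamma*\divv\tilde\bff=0$. Thus (\ref{mainsyst}) holds in $B_{r_0}$, and in fact on all of $\R^3$ with $\curl\bv=0$ inside the ball.

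The estimates are standard whole-space facts, applied to $\tilde\bff$ with $\|\tilde\bff\|_{L_p}=\|\bff\|_{L_p}$. The kernel of (\ref{BiotSavar3d}) is bounded by $C|\bx-\by|^{-2}$, so the Hardy--Littlewood--Sobolev inequality gives (\ref{est1}) with $1/q=1/p-1/3$. Next, $\nabla\bv=\nabla\curl(\Gamma*\tilde\bff)$ is a matrix of second derivatives of the Newtonian potential. These are Calderón--Zygmund singular integrals, plus a multiple of the identity from the delta part, so they are bounded on $L_p$ for $1<p<\infty$, which gives (\ref{est2}). Decay at infinity (\ref{boundcondinfsphere}) follows from $\bv\in L_q$ together with local regularity; for compactly supported data it also follows directly from the kernel's $|\bx|^{-2}$ decay, and the general case is obtained by density.

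The main step, and the expected obstacle, is the boundary condition (\ref{boundcondsphere}). Inside the ball $|\bx|<r_0$ the field $\bv$ satisfies $\curl\bv=0$ and $\divv\bv=0$, so $\bv=\nabla\phi$ with $\phi$ harmonic, and $\phi$ is expanded in interior harmonics $r^\ell Y_\ell^m$. By (\ref{est2}), $\bv\in W^{1,p}_{loc}$, so its trace on the sphere is the same from both sides. It therefore suffices to show $\phi\equiv\mathrm{const}$, i.e.\ that every interior multipole coefficient of $\bv$ vanishes. To compute these coefficients I will evaluate the vector potential $\Gamma*\tilde\bff$ for $|\bx|<r_0$ by expanding $1/|\bx-\by|=\sum_\ell \frac{4\pi}{2\ell+1}\frac{r^\ell}{|\by|^{\ell+1}}\sum_m Y_\ell^m(\hat\bx)\overline{Y_\ell^m(\hat\by)}$ for $|\by|>|\bx|$. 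The coefficients are moments of $\tilde\bff$ against $|\by|^{-\ell-1}\overline{Y_\ell^m}$. After taking the curl and projecting with the identities for $\mathbf Y,\mathbf\Psi,\mathbf\Phi$ listed above, the only surviving gradient parts should pair $\bff$ exactly against $\overline{\mathbf\Phi_{\ell m}}(\hat\by)/|\by|^{\ell+1}$. In other words, the interior field should be $\sum_{\ell,m} c_{\ell m}\nabla(r^\ell Y_\ell^m)$ with $c_{\ell m}$ proportional to the integral in (\ref{ortcondsphere}). Equivalently, one can check directly from (\ref{sol_sphere_1}) and (\ref{sol_sphere_2}) that at $r=r_0$ both $V^r_{\ell m}$ and $V^{(1)}_{\ell m}$ reduce to multiples of (\ref{ortcondspherepolar}), while $V^{(2)}_{\ell m}(r_0)=0$ by (\ref{mainsyst_sphere_eq1}) and $f^r_{\ell m}(r_0)=0$. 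For $L_p$ data I will do this on smooth compactly supported approximants and pass to the limit using (\ref{est1}). The $\ell=0$ term contributes nothing, since $\mathbf\Phi_{00}=0$ and $\tilde\bff$ is divergence-free. Hence the orthogonality condition forces $\bv=0$ inside the ball, and by continuity of the trace $\bv=0$ on $|\bx|=r_0$.

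For uniqueness, take the difference of two solutions. It is a field in $L_q$ that is curl-free and divergence-free in $B_{r_0}$ and vanishes on the sphere. Its components are harmonic, vanish on $|\bx|=r_0$, and tend to zero at infinity, so by the maximum principle for the exterior Dirichlet problem each component is zero.
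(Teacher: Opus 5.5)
Your argument is correct, but it runs in the opposite direction from the paper's.

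\textbf{The two routes.} The paper starts in the exterior. It solves the radial system (\ref{mainsyst_sphere_eq3})--(\ref{mainsyst_sphere_eq4}) mode by mode and uses (\ref{sol_sphere_1})--(\ref{sol_sphere_2}) to see that under (\ref{ortcondspherepolar}) every mode vanishes at $r=r_0$. It then extends the field by zero into the ball, argues through absolute continuity of each mode that the extension solves the whole-space system with vorticity $\tilde{\bff}$, and identifies it with (\ref{BiotSavar3d}) by uniqueness for the Cauchy problem.

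You start instead from the whole-space field $\bv=\curl(\Gamma*\tilde{\bff})$. You use $(\bff,\bn)=0$ to make $\tilde{\bff}$ solenoidal, so that $\bv=\nabla\phi$ with $\phi$ harmonic in the ball. You then show that the orthogonality conditions are exactly the interior multipole moments of $\phi$. The boundary condition follows from $W^{1,p}_{loc}$ trace continuity, and uniqueness from the exterior maximum principle.

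\textbf{The step you leave at ``should''.} It does close, and more simply than through the $\mathbf Y,\mathbf\Psi,\mathbf\Phi$ projections. Using the addition theorem and $\hat{\bx}\times\nabla_{\hat{\bx}}P_\ell(\hat{\bx}\cdot\hat{\by})=-\hat{\by}\times\nabla_{\hat{\by}}P_\ell(\hat{\bx}\cdot\hat{\by})$, one finds for $|\bx|=r<r_0$
\[
\frac{\bx}{|\bx|}\cdot\bv(\bx)=-\sum_{\ell\ge1}\sum_{m=-\ell}^{\ell}\frac{r^{\ell-1}}{2\ell+1}\,Y_\ell^m\Big(\frac{\bx}{|\bx|}\Big)\iiint_{B_{r_0}}\frac{\bff(\by)\cdot\overline{\mathbf{\Phi}_{\ell m}}(\by/|\by|)}{|\by|^{\ell+1}}\,d\by .
\]
Under (\ref{ortcondsphere}) this gives $\partial_r\phi\equiv0$, so $\phi$ is constant and $\bv\equiv0$ in the ball. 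By dominated convergence this holds directly for $\bff\in L_p$ with $p<3$, so no approximation is needed.

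\textbf{What each route buys.} In yours, the field and the estimates (\ref{est1})--(\ref{est2}) come from whole-space theory. Spherical harmonics are used only inside the ball, where the expansion converges geometrically. You never have to sum the mode-wise solutions (\ref{sol_sphere_1})--(\ref{sol_sphere_2}) into a field of the right class, nor check the zero extension across the sphere; the paper passes over both quickly. The paper's route, in exchange, gives explicit radial profiles and exhibits (\ref{ortcondspherepolar}) as the solvability condition of the ODE system, which also yields necessity.

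\textbf{Two small remarks.} First, your ``equivalently'' shortcut through (\ref{sol_sphere_1})--(\ref{sol_sphere_2}) is not an independent check within your framework. It presupposes that the Biot--Savart field coincides with those formulas in the exterior, which is exactly what the paper's argument establishes. Keep the multipole computation as the actual proof.

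Second, both arguments tacitly need $p<3$, so that (\ref{BiotSavar3d}), (\ref{est1}) and the $\ell=1$ moment converge for general $L_p$ data. Also, pointwise decay at infinity does not follow by density from $\bv\in L_q$, $\nabla\bv\in L_p$. Condition (\ref{boundcondinfsphere}) is best read as $\bv\in L_q$. That is all your uniqueness argument needs, since an $L_q$ field harmonic in the exterior decays pointwise by the mean-value property.
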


\begin{proof}
The solution is given by the explicit formulas (\ref{sol_sphere_1}), (\ref{sol_sphere_2}). The boundary conditions (\ref{boundcondsphere}), (\ref{boundcondinfsphere}) are satisfied by construction. 
We extend the solution $\bv(\by)$ by zero inside the sphere. Then formula (\ref{BiotSavar3d}) becomes the Biot--Savart--Laplace law, which gives the explicit solution of the Cauchy problem for system (\ref{mainsyst}), (\ref{boundcondinfsphere}) (see \cite{GallayWayne}). We now prove that the solution extended by zero is also a solution of the exterior problem. In general, a solution of an elliptic system extended by zero in this way need not be a solution of the Cauchy problem. In spherical harmonics, the system (\ref{mainsyst}) reduces to a system of one-dimensional equations (\ref{mainsyst_sphere_eq1})--(\ref{mainsyst_sphere_eq4}) in the class of functions that are absolutely continuous on $(r_0,\infty)$ with derivative in $L_p$ for some $p>1$. Since the solution of this system vanishes at $r=r_0$ when the conditions of the theorem are satisfied, the solution extended by zero to $[0,\infty)$
 is also absolutely continuous, with derivative remaining in the same class $L_p$. The solution of the Cauchy problem (\ref{mainsyst}), (\ref{boundcondinfsphere}) is unique and is given by formula (\ref{BiotSavar3d}).


The integral in (\ref{BiotSavar3d}) is a Riesz potential, so the first estimate (\ref{est1}) follows from the Hardy--Littlewood--Sobolev inequality (see [8, Chapter~V, \S~1]). The function $\nabla\frac{\bx}{|\bx|^3}$ is the kernel of a Calder\'{o}n--Zygmund singular integral operator, so the second estimate follows from the corresponding $L_p$ boundedness result (see Theorem~II.3 in \cite{Calderon}).
\end{proof}

\begin{remark*}
Unlike in a bounded domain, in an unbounded domain the orthogonality condition does not include orthogonality to constants: the requirement
$$\iiint_{B_{r_0}}\bff \,d\mathbf{x}=0$$
is not necessary. In two dimensions, for $\Omega \subset \R^2$ with the no-slip condition (\ref{boundcond}), Stokes' theorem gives a non-trivial circulation at infinity:
\begin{align*}
\lim_{R\to\infty}\oint_{\vert\bx\vert=R} \bv \cdot d\mathbf{l} = \iint_{\Omega}\bff \,d\mathbf{x} \neq 0.
\end{align*}
The $L_2$-norm of such solutions is infinite. In three dimensions:
\begin{align*}
\lim_{R\to\infty}\oiint_{\vert\bx\vert=R} \bv \times \bn \, dS = \iiint_{\Omega}\bff \,d\mathbf{x} \neq 0.
\end{align*}
A non-zero mean of the vorticity function produces circulatory flows at infinity while preserving (\ref{boundcondinfsphere}). Even so, from (\ref{est1}) $\bv$ may have finite $L_2$-norm if $\bff \in L_{6/5}$.
\end{remark*}

\vskip 0.5cm
Replacing condition (\ref{boundcondinfsphere}) with a prescribed uniform flow at infinity
\begin{align}
\bv \to \bv_\infty,~|\bx|\to \infty, \label{boundcondinfsphere2}
\end{align}
the solvability condition instead of (\ref{ortcondspherepolar}) becomes:
\begin{align*} 
\int_{r_0}^\infty
s^{1-\ell}  f_{\ell m}^{(2)}(s)ds = - \frac{r_0^{1-\ell}}{\ell (\ell +1)} v_{\infty, \ell m}^{r},
\end{align*}
or
\begin{align*}
\iiint_{B_{r_0}} \frac{\bff(\bx) \cdot \overline{\mathbf {\Phi}_{\ell m}}\left (\bx/|\bx| \right )}
{|\bx|^{1 + \ell}} \dx = - \frac{r_0^{1-\ell}}{\ell (\ell +1)} v_{\infty, \ell m}^{r},
\end{align*}
where $v_{\infty, \ell m}^{r}$ is the radial component of $\bv_\infty$ in the $\mathbf{Y}_{\ell m}$ direction. Since $v_{\infty, \ell m}^{r}$ vanishes except for isolated $\ell$, $m$, the orthogonality conditions largely remain intact.

The radial and two tangential components $\hat{\mathbf{r}}$, $\hat{\boldsymbol{\theta}}$, $\hat{\boldsymbol{\varphi}}$ of the vector field are related to Cartesian components $\{ \hat {e_k}\}$ by:
\begin{align*}
&\hat {\mathbf  r}=\sin \theta \cos \varphi {\boldsymbol {\hat {e_1 }}}+\sin \theta \sin \varphi {\boldsymbol {\hat {e_2 }}}+\cos \theta {\boldsymbol {\hat {e_3}}},\\
&\hat {\mathbf \theta }=\cos \theta \cos \varphi {\boldsymbol {\hat {e_1 }}}+\cos \theta \sin \varphi {\boldsymbol {\hat {e_2 }}}-\sin \theta {\boldsymbol {\hat {e_3}}},\\
&\hat {\mathbf \varphi }=-\sin \varphi {\boldsymbol {\hat {e_1 }}}+\cos \varphi {\boldsymbol {\hat {e_2 }}}.
\end{align*}

The vector $\bv_{\infty}$ with Cartesian coordinates $(0,0,1)$ has the spherical harmonic expansion:
$$
\bv_{\infty}=\cos \theta \,{\hat {\mathbf {r} }}-\sin \theta \,{\hat {\mathbf {\theta } }}=\mathbf {Y} _{10}+\mathbf{\Psi }_{10}.
$$
Hence $v_{\infty, \ell m}^{r}$ is non-zero only for $\ell=1$, $m=0$.

\section {Conclusion}
In dimensions greater than two vector fields from the kernel of $\curl^2$ (in dimension two - the kernel of $\Delta = \curl(\nabla^\perp)$) ensure the solvability of the divergence-curl problem with a homogeneous boundary condition. For certain spherical domains, it suffices to take a countable subset of vector spherical harmonics. These fields are solenoidal (but with non-zero curl), and consequently harmonic. In the exterior of a sphere, when the orthogonality condition is satisfied, the solution is given by the explicit Biot--Savart--Laplace formula.

\end{document}